\newtheorem{theorem}{Theorem}
\newtheorem{remark}{Remark}
\theoremstyle{definition}
\newtheorem{definition}{Definition}
\newtheorem{problem}{Problem}
\title{\LARGE \bf
Auxiliary-Variable Adaptive Control Barrier Functions \\for Safety Critical Systems}
\author{Shuo Liu$^{1}$, Wei Xiao$^{2}$ and Calin A. Belta$^{1}$
\thanks{This work was supported in part by the NSF under grant IIS-2024606.}
\thanks{$^{1}$S. Liu and C. Belta are with the department of Mechanical Engineering, Boston
University, Brookline, MA, 02215, USA. 
        {\tt\small \{liushuo, cbelta\}@bu.edu}}%
\thanks{$^{2}$W. Xiao is with the Computer Science and Artificial Intelligence Lab, Massachusetts Institute of Technology. 
        {\tt\small weixy@mit.edu}}%
}
\begin{document} 
\maketitle
\begin{abstract}
This paper studies safety guarantees for systems with time-varying control bounds. It has been shown that optimizing quadratic costs subject to state and control constraints can be reduced to a sequence of Quadratic Programs (QPs) using Control Barrier Functions (CBFs). One of the main challenges in this method is that the CBF-based QP could easily become infeasible under tight control bounds, especially when the control bounds are time-varying. The recently proposed adaptive CBFs have addressed such infeasibility issues, but require extensive and non-trivial hyperparameter tuning for the CBF-based QP and may introduce overshooting control near the boundaries of safe sets. To address these issues, we propose a new type of adaptive CBFs called Auxiliary-Variable Adaptive CBFs (AVCBFs). Specifically, we introduce an auxiliary variable that multiplies each CBF itself, and define dynamics for the auxiliary variable to adapt it in constructing the corresponding CBF constraint. In this way, we can improve the feasibility of the CBF-based QP while avoiding extensive parameter tuning with non-overshooting control since the formulation is identical to classical CBF methods.
 We demonstrate the advantages of using AVCBFs and compare them with existing techniques on an Adaptive Cruise Control (ACC) problem with time-varying control bounds. 
\end{abstract}
\section{Introduction}
\label{sec:Introduction}

Barrier functions (BFs) are Lyapunov-like functions \cite{tee2009barrier} whose use can be traced back to optimization problems \cite{boyd2004convex}. They have been utilized to prove set invariance \cite{aubin2011viability} \cite{prajna2007framework} and to derive multi-objective control \cite{panagou2013multi} \cite{wang2016multi}. More recently, a less restrictive form of a barrier function, which is allowed to grow when far away from the boundary of the set, was proposed in \cite{ames2016control}. Another approach that allows
a barrier function to reach the boundary of
the set was proposed in \cite{glotfelter2017nonsmooth}. 

In \cite{boyd2004convex}, Control Barrier Functions (CBFs) are extensions of BFs used to render a set forward invariant for an affine control system. It has been shown that stabilizing a control-affine system to admissible states while optimizing a quadratic cost subject to state and control constraints can be reduced to a sequence of Quadratic Programs (QPs) \cite{ames2016control} by unifying CBFs and Control Lyapunov Functions (CLFs) \cite{ames2012control}. 
Exponential CBFs \cite{nguyen2016exponential} are introduced in order to adapt CBFs to high relative degree systems. A more general form of exponential CBFs, called High Order CBFs (HOCBFs), has been proposed in \cite{xiao2021high}. The CBF method has been widely used to enforce safety in many applications, including adaptive cruise control \cite{ames2016control}, bipedal robot walking, \cite{hsu2015control} and robot swarming \cite{borrmann2015control}. However, the aforementioned CBF-based QP might be infeasible in the presence of tight or time-varying control bounds due to the conflict between CBF constraints and control bounds.

There are several approaches that aim to enhance the feasibility of the CBF method while guaranteeing safety. One can formulate CBFs as constraints under a Nonlinear Model Predictive Control (NMPC) framework, which allows the controller to predict future state information up to a horizon larger than one. This leads to a less aggressive control strategy \cite{zeng2021enhancing}. However, the corresponding
optimization is overall nonlinear and non-convex, 
which could be computationally expensive for
nonlinear systems. A convex MPC with linearized, discrete-time CBFs under an iterative approach was proposed in \cite{liu2023iterative} to address the above challenges, but this comes at the price of losing safety guarantees. The works in  \cite{gurriet2018online,singletary2019online,gurriet2020scalable,chen2021backup} recently developed approaches
in which a known backup set or backup policy is defined that can be used to extend the safe set to a larger viable set
to enhance the feasible space of the system in a finite time horizon
under input constraints. This backup approach has further been generalized to infinite time horizons \cite{squires2018constructive} \cite{breeden2021high}. One limitation of these approaches is that they require prior knowledge of finding appropriate backup sets, policy or nominal control law, which are difficult to be predefined. Another limitation is that they only focus on feasibility, which may introduce over-aggressive or over-conservative control strategies. Sufficient conditions have been proposed in \cite{xiao2022sufficient} to guarantee the feasibility of the CBF-based QP, but they may be hard to find for general constrained control problems. All these approaches only consider time-invariant control limitations.

In order to account for time-varying control bounds, adaptive CBFs (aCBFs) \cite{xiao2021adaptive} have been proposed by
introducing penalty functions in HOCBFs constraints, which provide flexible and adaptive control strategies over time. However, this approach requires extensive parameter tuning. 
To address this issue,
we propose a novel type of aCBFs to safety-critical control problems. Specifically, the contributions of this paper are as follows:

\begin{itemize}
\item We propose Auxiliary-Variable Adaptive CBFs (AVCBFs) that can improve the feasibility of the CBF method under tight and time-varying control bounds.

\item We show that the proposed AVCBFs are analogous to existing CBF methods such that excessive additional constraints are not required. Most importantly, the AVCBFs preserve the adaptive property of aCBFs \cite{xiao2021adaptive}, while generating non-overshooting control policies near the boundaries of safe sets.

\item We demonstrate the effectiveness of the proposed AVCBFs on an adaptive cruise control problem with tight and time-varying control bounds, and compare it with existing CBF methods. The results show that the proposed approach can generate smoother and more adaptive control compared to existing methods, without requiring design of excessive additional constraints and complicated parameter-tuning procedures.
\end{itemize}

\section{Preliminaries}
\label{sec:preliminaries}

Consider an affine control system expressed as 
\begin{equation}
\label{eq:affine-control-system}
\dot{\boldsymbol{x}}=f(\boldsymbol{x})+g(\boldsymbol{x})\boldsymbol{u},
\end{equation}
 where $\boldsymbol{x}\in \mathbb{R}^{n}, f:\mathbb{R}^{n}\to\mathbb{R}^{n}$ and $g:\mathbb{R}^{n}\to\mathbb{R}^{n\times q}$ are locally Lipschitz, and $\boldsymbol{u}\in \mathcal U\subset \mathbb{R}^{q}$ denotes the control constraint set, which is defined as 
 
\begin{equation}
\label{eq:control-constraint}
\mathcal U \coloneqq \{\boldsymbol{u}\in \mathbb{R}^{q}:\boldsymbol{u}_{min}\le \boldsymbol{u} \le \boldsymbol{u}_{max} \}, \end{equation}
 with $\boldsymbol{u}_{min},\boldsymbol{u}_{max}\in \mathbb{R}^{q}$ (the vector inequalities are interpreted componentwise).
 
\begin{definition}[Class $\kappa$ function~\cite{Khalil:1173048}]
\label{def:class-k-f}
A continuous function $\alpha:[0,a)\to[0,+\infty],a>0$ is called a class $\kappa$ function if it is strictly increasing and $\alpha(0)=0.$
\end{definition}

\begin{definition}
\label{def:forward-inv}
A set $\mathcal C\subset \mathbb{R}^{n}$ is forward invariant for system \eqref{eq:affine-control-system} if its solutions for some $\boldsymbol{u} \in \mathcal U$ starting from any $\boldsymbol{x}(0) \in \mathcal C$ satisfy $\boldsymbol{x}(t) \in \mathcal C, \forall t \ge 0.$
\end{definition}

\begin{definition}
\label{def:relative-degree}
The relative degree of a differentiable function $b:\mathbb{R}^{n}\to\mathbb{R}$ is the minimum number of times we need to differentiate it along dynamics \eqref{eq:affine-control-system} until every component of $\boldsymbol{u}$ explicitly shows. 
\end{definition}

In this paper, safety is defined as the forward invariance of set $\mathcal C$. The relative degree of function $b$ is also referred to as the relative degree of constraint $b(\boldsymbol{x}) \ge 0$. For a constraint $b(\boldsymbol{x})\ge0$ with relative degree $m$, \ $b:\mathbb{R}^{n}\to\mathbb{R}$ and $\psi_{0}(\boldsymbol{x})\coloneqq b(\boldsymbol{x}),$ we can define a sequence of functions as $\psi_{i}:\mathbb{R}^{n}\to\mathbb{R},\ i\in \{1,...,m\}:$

\begin{equation}
\label{eq:sequence-f1}
\psi_{i}(\boldsymbol{x})\coloneqq\dot{\psi}_{i-1}(\boldsymbol{x})+\alpha_{i}(\psi_{i-1}(\boldsymbol{x})),\ i\in \{1,...,m\}, 
\end{equation}
where $\alpha_{i}(\cdot ),\ i\in \{1,...,m\}$ denotes a $(m-i)^{th}$ order differentiable class $\kappa$ function. A sequence of sets $\mathcal C_{i}$ are defined based on \eqref{eq:sequence-f1} as
\begin{equation}
\label{eq:sequence-set1}
\mathcal C_{i}\coloneqq \{\boldsymbol{x}\in\mathbb{R}^{n}:\psi_{i}(\boldsymbol{x})\ge 0\}, \ i\in \{0,...,m-1\}. 
\end{equation}

\begin{definition}[HOCBF~\cite{xiao2021high}]
\label{def:HOCBF}
Let $\psi_{i}(\boldsymbol{x}),\ i\in \{1,...,m\}$ be defined by \eqref{eq:sequence-f1} and $\mathcal C_{i},\ i\in \{0,...,m-1\}$ be defined by \eqref{eq:sequence-set1}. A function $b:\mathbb{R}^{n}\to\mathbb{R}$ is a High Order Control Barrier Function (HOCBF) with relative degree $m$ for system \eqref{eq:affine-control-system} if there exist $(m-i)^{th}$ order differentiable class $\kappa$ functions $\alpha_{i},\ i\in \{1,...,m\}$ such that
\begin{equation}
\label{eq:highest-HOCBF}
\begin{split}
\sup_{\boldsymbol{u}\in \mathcal U}[L_{f}^{m}b(\boldsymbol{x})+L_{g}L_{f}^{m-1}b(\boldsymbol{x})\boldsymbol{u}+O(b(\boldsymbol{x}))
+\\
\alpha_{m}(\psi_{m-1}(\boldsymbol{x}))]\ge 0,
\end{split}
\end{equation}
$\forall \boldsymbol{x}\in \mathcal C_{0}\cap,...,\cap \mathcal C_{m-1},$ where $O(\cdot)=\sum_{i=1}^{m-1}L_{f}^{i}(\alpha_{m-1}\circ\psi_{m-i-1})(\boldsymbol{x})$; $L_{f}^{m}$ denotes the $m$-th Lie derivative along $f$ and $L_{g}$ denotes the matrix of Lie derivatives along the columns of $g$. $\psi_{i}(\boldsymbol{x})\ge0$ is referred to as the $i^{th}$ order HOCBF constraint. We assume that $L_{g}L_{f}^{m-1}b(\boldsymbol{x})\boldsymbol{u}\ne0$ on the boundary of set $\mathcal C_{0}\cap,...,\cap \mathcal C_{m-1}.$ 
\end{definition}

\begin{theorem}[Safety Guarantee~\cite{xiao2021high}]
\label{thm:safety-guarantee}
Given a HOCBF $b(\boldsymbol{x})$ from Def. \ref{def:HOCBF} with corresponding sets $\mathcal{C}_{0}, \dots,\mathcal {C}_{m-1}$ defined by \eqref{eq:sequence-set1}, if $\boldsymbol{x}(0) \in \mathcal {C}_{0}\cap \dots \cap \mathcal {C}_{m-1},$ then any Lipschitz controller $\boldsymbol{u}$ that satisfies the constraint in \eqref{eq:highest-HOCBF}, $\forall t\ge 0$ renders $\mathcal {C}_{0}\cap \dots \cap \mathcal {C}_{m-1}$ forward invariant for system \eqref{eq:affine-control-system}, $i.e., \boldsymbol{x} \in \mathcal {C}_{0}\cap \dots \cap \mathcal {C}_{m-1}, \forall t\ge 0.$
\end{theorem}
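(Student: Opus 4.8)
The plan is to reduce the statement to an iterated application of a comparison (Nagumo-type) argument running down the sequence $\psi_m,\psi_{m-1},\dots,\psi_0$. First I would observe that, by the construction \eqref{eq:sequence-f1}, the bracketed expression in the HOCBF inequality \eqref{eq:highest-HOCBF} evaluated at a fixed $\boldsymbol{u}$ is exactly $\psi_m(\boldsymbol{x})=\dot{\psi}_{m-1}(\boldsymbol{x})+\alpha_m(\psi_{m-1}(\boldsymbol{x}))$; hence a Lipschitz controller $\boldsymbol{u}$ satisfying \eqref{eq:highest-HOCBF} is precisely one for which $\psi_m(\boldsymbol{x}(t))\ge 0$ holds along the closed-loop trajectory. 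Since $f$, $g$, and $\boldsymbol{u}$ are locally Lipschitz, the closed-loop solution $\boldsymbol{x}(t)$ exists and is unique on its maximal interval, and each $\psi_i(\boldsymbol{x}(t))$ is well-defined; the $(m-i)^{th}$-order differentiability of the $\alpha_i$ assumed in Def. \ref{def:HOCBF} guarantees that $\psi_i(\boldsymbol{x}(t))$ is $C^1$ in $t$ for $i\le m-1$, so the derivatives appearing below make sense.

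Next I would isolate the core one-step lemma: if $y:[0,\infty)\to\mathbb{R}$ is $C^1$, $y(0)\ge 0$, and $\dot{y}(t)\ge -\alpha(y(t))$ for a class $\kappa$ function $\alpha$, then $y(t)\ge 0$ for all $t\ge 0$. The argument is that $\alpha(0)=0$, so at any time $t^\star$ with $y(t^\star)=0$ one has $\dot{y}(t^\star)\ge 0$; coupling this with a comparison against the solution of $\dot{z}=-\alpha(z)$, $z(0)=y(0)$, whose trajectory stays nonnegative because $z=0$ is an equilibrium, rules out $y$ ever dropping below zero. One has to take mild care here about possible non-uniqueness of the comparison ODE when $\alpha$ is merely continuous; the clean way is to work with the extremal (minimal) solution of the comparison equation, or equivalently to argue by contradiction using the \emph{first crossing time} $\inf\{t:y(t)<0\}$ and the sign of $\dot{y}$ there.

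I would then run the induction downward in the index $i$. The hypothesis gives $\psi_m(\boldsymbol{x}(t))\ge 0$ for all $t$, i.e. $\dot{\psi}_{m-1}(\boldsymbol{x}(t))\ge -\alpha_m(\psi_{m-1}(\boldsymbol{x}(t)))$, and $\psi_{m-1}(\boldsymbol{x}(0))\ge 0$ because $\boldsymbol{x}(0)\in\mathcal{C}_{m-1}$; the one-step lemma then yields $\psi_{m-1}(\boldsymbol{x}(t))\ge 0$ for all $t\ge 0$, which is forward invariance of $\mathcal{C}_{m-1}$ and also prevents finite-time escape so the solution extends to all $t\ge 0$. But $\psi_{m-1}(\boldsymbol{x}(t))\ge 0$ is exactly $\dot{\psi}_{m-2}(\boldsymbol{x}(t))\ge -\alpha_{m-1}(\psi_{m-2}(\boldsymbol{x}(t)))$, and $\psi_{m-2}(\boldsymbol{x}(0))\ge 0$ since $\boldsymbol{x}(0)\in\mathcal{C}_{m-2}$, so the lemma applies again to give $\psi_{m-2}(\boldsymbol{x}(t))\ge 0$. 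Iterating this for $i=m-1,m-2,\dots,0$ shows $\psi_i(\boldsymbol{x}(t))\ge 0$, hence $\boldsymbol{x}(t)\in\mathcal{C}_i$, for every $i\in\{0,\dots,m-1\}$ and all $t\ge 0$, which is precisely the asserted forward invariance of $\mathcal{C}_0\cap\cdots\cap\mathcal{C}_{m-1}$.

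The step I expect to be the main obstacle is the comparison lemma itself, specifically the regularity handling of the class $\kappa$ functions $\alpha_i$: making sure $\dot{\psi}_i$ exists along trajectories (this is exactly what the differentiability clause in Def. \ref{def:HOCBF} is for) and making the comparison rigorous without assuming $\alpha_i$ is Lipschitz. Once that lemma is established, the identification of \eqref{eq:highest-HOCBF} with $\psi_m\ge 0$ and the downward induction are routine bookkeeping.
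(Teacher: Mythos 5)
Your proof is correct and follows essentially the same route as the source: the paper states Theorem~\ref{thm:safety-guarantee} as a cited preliminary from \cite{xiao2021high} without reproving it, but the identification of \eqref{eq:highest-HOCBF} with $\psi_m(\boldsymbol{x})\ge 0$ followed by a downward induction through the comparison/Nagumo lemma is exactly the argument used there, and it is mirrored in this paper's own Remark~\ref{rem:safety-guarantee-2} and the proof of Theorem~\ref{thm:safety-guarantee-3} (where you are in fact more careful than the paper's first-crossing-time sketch about non-Lipschitz class $\kappa$ functions). The one slip is the parenthetical claim that $\psi_{m-1}(\boldsymbol{x}(t))\ge 0$ ``prevents finite-time escape'': invariance of a possibly unbounded set does not by itself rule out finite escape time, though this is immaterial here because the hypothesis that $\boldsymbol{u}$ satisfies \eqref{eq:highest-HOCBF} for all $t\ge 0$ already presupposes the closed-loop solution exists globally.
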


\begin{definition}[CLF~\cite{ames2012control}]
\label{def:control-l-f}
A continuously differentiable function $V:\mathbb{R}^{n}\to\mathbb{R}$ is an exponentially stabilizing Control Lyapunov Function (CLF) for system \eqref{eq:affine-control-system} if there exist constants $c_{1}>0, c_{2}>0,c_{3}>0$ such that for $\forall \boldsymbol{x} \in \mathbb{R}^{n}, c_{1}\left \|  \boldsymbol{x} \right \| ^{2} \le V(\boldsymbol{x}) \le c_{2}\left \|  \boldsymbol{x} \right \| ^{2},$
\begin{equation}
\label{eq:clf}
\inf_{\boldsymbol{u}\in \mathcal U}[L_{f}V(\boldsymbol{x})+L_{g}V(\boldsymbol{x})\boldsymbol{u}+c_{3}V(\boldsymbol{x})]\le 0.
\end{equation}
\end{definition}

The existing works \cite{nguyen2016exponential},\cite{xiao2021high} combine HOCBFs \eqref{eq:highest-HOCBF} for systems with high relative degree with quadratic costs to form safety-critical optimization problems.
CLFs \eqref{eq:clf} can also be incorporated in optimization problems (see \cite{xiao2022sufficient},\cite{xiao2021adaptive}) if exponential convergence of some states is desired. In these works, time is discretized into time intervals,
and an optimization problem with constraints
given by HOCBFs and CLFs is solved in each
time interval. Since the state value is fixed
at the beginning of the interval, these
constraints are linear in control, therefore each optimization problem is a QP. The optimal control obtained by solving each QP is applied at the beginning of the interval and held constant for the
whole interval. During each interval, the state is updated using dynamics \eqref{eq:affine-control-system}. This method, which is referred to as CBF-CLF-QP, works conditioned on the fact that solving the QP at every time interval is feasible. However, this is not guaranteed, in particular
under tight or time-varying control bounds. 
 The authors of \cite{xiao2021adaptive} proposed a new type of HOCBF called PACBF, which introduced a time-varying penalty variable $p_{i}(t)$ in front of the class $\kappa$ function in the $i^{th}$ order HOCBF constraint ($i\in \{1,\dots,m\}$), trying to maximize the feasibility of solving CBF-CLF-QPs. However, the formulation of PACBFs requires the design of many additional constraints. Defining such constraints may not be straightforward, and may result in complicated parameter-tuning processes. To address this issue, we develop a new type of adaptive CBFs, called Auxiliary-Variable Adaptive Control Barrier Functions (AVCBFs), which is described in the next section.

\section{Auxiliary-Variable Adaptive Control Barrier Functions}
\label{sec:AVBCBF}

In this section, we introduce Auxiliary-Variable Adaptive Control Barrier Functions (AVCBFs) for safety-critical control.
We start with a simple example to motivate the need for AVCBFs.

\subsection{Motivation Example: Simplified Adaptive Cruise Control}
\label{subsec:SACC-problem}

Consider a Simplified Adaptive Cruise Control (SACC) problem with the dynamics of ego vehicle expressed as 
\begin{small}
\begin{equation}
\label{eq:SACC-dynamics}
\underbrace{\begin{bmatrix}
\dot{z}(t) \\
\dot{v}(t) 
\end{bmatrix}}_{\dot{\boldsymbol{x}}(t)}  
=\underbrace{\begin{bmatrix}
 v_{p}-v(t) \\
 0
\end{bmatrix}}_{f(\boldsymbol{x}(t))} 
+ \underbrace{\begin{bmatrix}
  0 \\
  1 
\end{bmatrix}}_{g(\boldsymbol{x}(t))}u(t),
\end{equation}
\end{small}
where $v_{p}>0, v(t)>0$ denote the velocity of the lead vehicle (constant velocity) and ego vehicle, respectively, $z(t)$ denotes the distance between the lead and ego vehicle and $u(t)$ denotes the acceleration (control) of ego vehicle, subject to the control constraints
\begin{equation}
\label{eq:simple-control-constraint}
u_{min}\le u(t) \le u_{max}, \forall t \ge0,
\end{equation}
where $u_{min}<0$ and $u_{max}>0$ are the minimum and maximum control input, respectively.

 For safety, we require that $z(t)$ always be greater than or equal to the safety distance denoted by $l_{p}>0,$ i.e., $z(t)\ge l_{p}, \forall t \ge 0.$ Based on Def. \ref{def:HOCBF}, let $\psi_{0}(\boldsymbol{x})\coloneqq b(\boldsymbol{x})=z(t)-l_{p}.$ From \eqref{eq:sequence-f1} and \eqref{eq:sequence-set1}, since the relative degree of $b(\boldsymbol{x})$ is 2, we have
\begin{equation}
\label{eq:SACC-HOCBF-sequence}
\begin{split}
&\psi_{1}(\boldsymbol{x})\coloneqq v_{p}-v(t)+k_{1}\psi_{0}(\boldsymbol{x})\ge 0
,\\
&\psi_{2}(\boldsymbol{x})\coloneqq -u(t)+k_{1}(v_{p}-v(t))+k_{2}\psi_{1}(\boldsymbol{x})\ge 0,
\end{split}
\end{equation}
where $\alpha_{1}(\psi_{0}(\boldsymbol{x}))\coloneqq k_{1}\psi_{0}(\boldsymbol{x}), \alpha_{2}(\psi_{1}(\boldsymbol{x}))\coloneqq k_{2}\psi_{1}(\boldsymbol{x}), k_{1}>0, k_{2}>0.$ The constant class $\kappa$ coefficients $k_{1},k_{2}$ are always chosen small to equip ego vehicle with a conservative control strategy to keep it safe, i.e., smaller $k_{1},k_{2}$ make ego vehicle brake earlier (see \cite{xiao2021high}). Suppose we wish to minimize the energy cost as $\int_{0}^{T} u^{2}(t)dt.$ We can then formulate the cost in the QP with constraint $\psi_{2}(\boldsymbol{x})\ge0$ and control input constraint \eqref{eq:simple-control-constraint} to get the optimal controller for the SACC problem. However, the feasible set of input can easily become empty if $u(t)\le k_{1}(v_{p}-v(t))+k_{2}\psi_{1}(\boldsymbol{x})<u_{min}$,  which causes infeasibility of the optimization. In \cite{xiao2021adaptive}, the authors introduced penalty variables in front of class $\kappa$ functions to enhance the feasibility. This approach defines $\psi_{0}(\boldsymbol{x})\coloneqq b(\boldsymbol{x})=z(t)-l_{p}$ as PACBF and other constraints can be further defined as
\begin{equation}
\label{eq:SACC-PACBF-sequence}
\begin{split}
\psi_{1}(\boldsymbol{x},p_{1}(t))&\coloneqq v_{p}-v(t)+p_{1}(t)k_{1}\psi_{0}(\boldsymbol{x})\ge 0,\\
\psi_{2}(\boldsymbol{x},p_{1}(t),&\boldsymbol{\nu})\coloneqq \nu_{1}(t)k_{1}\psi_{0}(\boldsymbol{x})+p_{1}(t)k_{1}(v_{p}\\
-v(t))&-u(t)+\nu_{2}(t)k_{2}\psi_{1}(\boldsymbol{x},p_{1}(t))\ge 0,
\end{split}
\end{equation}
where $\nu_{1}(t)=\dot{p}_{1}(t),\nu_{2}(t)=p_{2}(t), p_{1}(t)\ge0,p_{2}(t)\ge0,\boldsymbol{\nu}=(\nu_{1}(t),\nu_{2}(t)).$ $p_{1}(t),p_{2}(t)$ are time-varying penalty variables, which alleviate the conservativeness of the control strategy and $\nu_{1}(t),\nu_{2}(t)$ are auxiliary inputs, which relax the constraints for $u(t)$ in $\psi_{2}(\boldsymbol{x},p_{1}(t),\boldsymbol{\nu})\ge0$ and \eqref{eq:simple-control-constraint}. However, in practice, we need to define several additional constraints to make PACBF valid as shown in Eqs. (24)-(27) in \cite{xiao2021adaptive}. First, we need to define HOCBFs ($b_{1}(p_{1}(t))=p_{1}(t),b_{2}(p_{2}(t))=p_{2}(t))$ based on Def. \ref{def:HOCBF} to ensure $p_{1}(t)\ge0,p_{2}(t)\ge0.$ Next we need to define HOCBF ($b_{3}(p_{1}(t))=p_{1,max}-p_{1}(t)$) to confine the value of $p_{1}(t)$ in the range $[0,p_{1,max}].$ We also need to define CLF ($V(p_{1}(t))=(p_{1}(t)-p_{1}^{\ast})^{2}$) based on Def. \ref{def:control-l-f} to keep $p_{1}(t)$ close to a small value $p_{1}^{\ast}.$ $b_{3}(p_{1}(t)), V(p_{1}(t))$ are necessary since $\psi_{0}(\boldsymbol{x},p_{1}(t))\coloneqq p_{1}(t)k_{1}\psi_{0}(\boldsymbol{x})$ in first constraint in \eqref{eq:SACC-PACBF-sequence} is not a class $\kappa$ function with respect to $\psi_{0}(\boldsymbol{x}),$ i.e., $p_{1}(t)k_{1}\psi_{0}(\boldsymbol{x})$ is not guaranteed to strictly increase since $\psi_{0}(\boldsymbol{x},p_{1}(t))$ is in fact a class $\kappa$ function with respect to $p_{1}(t)\psi_{0}(\boldsymbol{x})$, which is against Thm. \ref{thm:safety-guarantee}, therefore $\psi_{1}(\boldsymbol{x},p_{1}(t))\ge 0$ in \eqref{eq:SACC-PACBF-sequence} may not guarantee $\psi_{0}(\boldsymbol{x})\ge 0.$ This illustrates why we have to limit the growth of $p_{1}(t)$ by defining $b_{3}(p_{1}(t)),V(p_{1}(t)).$ However, the way to choose appropriate values for $p_{1,max},p_{1}^{\ast}$ is not straightforward. We can imagine as the relative degree of $b(\boldsymbol{x})$ gets higher, the number of additional constraints we should define also gets larger, which results in complicated parameter-tuning process. To address this issue, we introduce $a_{1}(t),a_{2}(t)$ in the form
\begin{small}
\begin{equation}
\label{eq:SACC-AVBCBF-sequence}
\begin{split}
\psi_{1}(\boldsymbol{x},\boldsymbol{a},\dot{a}_{1}(t))\coloneqq a_{2}(t)(\dot{\psi}_{0}(\boldsymbol{x},a_{1}(t))
+k_{1}\psi_{0}(\boldsymbol{x},a_{1}(t)))\ge 0,\\
\psi_{2}(\boldsymbol{x},\boldsymbol{a},\dot{a}_{1}(t),\boldsymbol{\nu})\coloneqq \nu_{2}(t)\frac{\psi_{1}(\boldsymbol{x},\boldsymbol{a},\dot{a}_{1}(t))}{a_{2}(t)} +a_{2}(t)(\nu_{1}(t)(z(t)\\
-l_{p})+2\dot{a}_{1}(t)(v_{p}-v(t))-a_{1}(t)u(t)+k_{1}\dot{\psi}_{0}(\boldsymbol{x},a_{1}(t)))\\
+k_{2}\psi_{1}(\boldsymbol{x},\boldsymbol{a},\dot{a}_{1}(t))\ge 0, 
\end{split}
\end{equation}
\end{small}
where $\psi_{0}(\boldsymbol{x},a_{1}(t))\coloneqq a_{1}(t)b (\boldsymbol{x})=a_{1}(t)(z(t)-l_{p}),\boldsymbol{\nu}=[\nu_{1}(t),\nu_{2}(t)]^{T}=[\ddot{a}_{1}(t),\dot{a}_{2}(t)]^{T},\boldsymbol{a}=[a_{1}(t),a_{2}(t)]^{T},$ $a_{1}(t),a_{2}(t)$ are time-varying auxiliary variables. Since $\psi_{0}(\boldsymbol{x},a_{1}(t))\ge0,\psi_{1}(\boldsymbol{x},\boldsymbol{a},\dot{a}_{1}(t))\ge 0$ will not be against $b(\boldsymbol{x})\ge 0,\dot{\psi}_{0}(\boldsymbol{x},a_{1}(t))
+k_{1}\psi_{0}(\boldsymbol{x},a_{1}(t))\ge 0$ iff $a_{1}(t)>0,a_{2}(t)>0,$ we need to define HOCBFs for auxiliary variables to make $a_{1}(t)>0,a_{2}(t)>0,$ which will be illustrated in Sec. \ref{sec:AVCBFs}.  $\nu_{1}(t),\nu_{2}(t)$ are auxiliary inputs which are used to alleviate the restriction of constraints for $u(t)$ in $\psi_{2}(\boldsymbol{x},\boldsymbol{a},\dot{a}_{1}(t),\boldsymbol{\nu})\ge0$ and \eqref{eq:simple-control-constraint}. Different from the first constraint in \eqref{eq:SACC-PACBF-sequence}, $k_{1}\psi_{0}(\boldsymbol{x},a_{1}(t))$ is still a class $\kappa$ function with respect to $\psi_{0}(\boldsymbol{x},a_{1}(t)),$ therefore we do not need to define additional HOCBFs and CLFs like $b_{3}(p_{1}(t)),V(p_{1}(t))$ to limit the growth of $a_{1}(t).$
We can rewrite $\psi_{1} (\boldsymbol{x},\boldsymbol{a},\dot{a}_{1}(t))$ in \eqref{eq:SACC-AVBCBF-sequence} as
\begin{equation}
\label{eq:SACC-AVBCBF-sequence-rewrite}
\begin{split}
\psi_{1}(\boldsymbol{x},\boldsymbol{a},\dot{a}_{1}(t))\coloneqq a_{2}(t)a_{1}(t)(v_{p}-v(t)\\
+k_{1}(1+\frac{\dot{a}_{1}(t)}{k_{1}a_{1}(t)})b(\boldsymbol{x}))\ge 0.
\end{split}
\end{equation}
Compared to the first constraint in \eqref{eq:SACC-HOCBF-sequence}, $\frac{\dot{a}_{1}(t)}{a_{1}(t)}$ is a time-varying auxiliary term to alleviate the conservativeness of control that small $k_{1}$ originally has, which shows the adaptivity of auxiliary terms to constant class $\kappa$ coefficients. 

\subsection{Adaptive HOCBFs for Safety:\ AVCBFs}
\label{sec:AVCBFs}

Motivated by the SACC example in Sec. \ref{subsec:SACC-problem}, given a function $b:\mathbb{R}^{n}\to\mathbb{R}$ with relative degree $m$ for system \eqref{eq:affine-control-system} and a time-varying vector $\boldsymbol{a}(t)\coloneqq [a_{1}(t),\dots,a_{m}(t)]^{T}$ with positive components called auxiliary variables, the key idea in converting a regular HOCBF into an adaptive
one without defining excessive constraints is to place one auxiliary variable in front of each function in \eqref{eq:sequence-f1} similar to \eqref{eq:SACC-AVBCBF-sequence}. 
As described in Sec. \ref{subsec:SACC-problem}, we only need to define HOCBFs for auxiliary variables to ensure each $a_{i}(t)>0, i \in \{1,...,m\}.$ To realize this, we need to define auxiliary systems that contain auxiliary states $\boldsymbol{\pi}_{i}(t)$ and inputs $\nu_{i}(t)$, through which systems we can extend each HOCBF to desired relative degree, just like $b(\boldsymbol{x})$ has relative degree $m$
with respect to the dynamics \eqref{eq:affine-control-system}. Consider $m$ auxiliary systems in the form 
\begin{equation}
\label{eq:virtual-system}
\dot{\boldsymbol{\pi}}_{i}=F_{i}(\boldsymbol{\pi}_{i})+G_{i}(\boldsymbol{\pi}_{i})\nu_{i}, i \in \{1,...,m\},
\end{equation}
where $\boldsymbol{\pi}_{i}(t)\coloneqq [\pi_{i,1}(t),\dots,\pi_{i,m+1-i}(t)]^{T}\in \mathbb{R}^{m+1-i}$ denotes an auxiliary state with $\pi_{i,j}(t)\in \mathbb{R}, j \in \{1,...,m+1-i\}.$ $\nu_{i}\in \mathbb{R}$ denotes an auxiliary input for \eqref{eq:virtual-system}, $F_{i}:\mathbb{R}^{m+1-i}\to\mathbb{R}^{m+1-i}$ and $G_{i}:\mathbb{R}^{m+1-i}\to\mathbb{R}^{m+1-i}$ are locally Lipschitz. For simplicity, we just build up the connection between an auxiliary variable and the system as $a_{i}(t)=\pi_{i,1}(t), \dot{\pi}_{i,1}(t)=\pi_{i,2}(t),\dots,\dot{\pi}_{i,m-i}(t)=\pi_{i,m+1-i}(t)$ and make $\dot{\pi}_{i,m+1-i}(t)=\nu_{i},$ then we can define many specific HOCBFs $h_{i}$ to enable $a_{i}(t)$ to be positive. 

Given a function $h_{i}:\mathbb{R}^{m+1-i}\to\mathbb{R},$ we can define a sequence of functions $\varphi_{i,j}:\mathbb{R}^{m+1-i}\to\mathbb{R}, i \in\{1,...,m\}, j \in\{1,...,m+1-i\}:$
\begin{equation}
\label{eq:virtual-HOCBFs}
\varphi_{i,j}(\boldsymbol{\pi}_{i})\coloneqq\dot{\varphi}_{i,j-1}(\boldsymbol{\pi}_{i})+\alpha_{i,j}(\varphi_{i,j-1}(\boldsymbol{\pi}_{i})),
\end{equation}
where $\varphi_{i,0}(\boldsymbol{\pi}_{i})\coloneqq h_{i}(\boldsymbol{\pi}_{i}),$ $\alpha_{i,j}(\cdot)$ are $(m+1-i-j)^{th}$ order differentiable class $\kappa$ functions. Sets $\mathcal{B}_{i,j}$ are defined as
\begin{equation}
\label{eq:virtual-sets}
\mathcal B_{i,j}\coloneqq \{\boldsymbol{\pi}_{i}\in\mathbb{R}^{m+1-i}:\varphi_{i,j}(\boldsymbol{\pi}_{i})>0\}, \ j\in \{0,...,m-i\}. 
\end{equation}
Let $\varphi_{i,j}(\boldsymbol{\pi}_{i}),\ j\in \{1,...,m+1-i\}$ and $\mathcal B_{i,j},\ j\in \{0,...,m-i\}$ be defined by \eqref{eq:virtual-HOCBFs} and \eqref{eq:virtual-sets} respectively. By Def. \ref{def:HOCBF}, a function $h_{i}:\mathbb{R}^{m+1-i}\to\mathbb{R}$ is a HOCBF with relative degree $m+1-i$ for system \eqref{eq:virtual-system} if there exist class $\kappa$ functions $\alpha_{i,j},\ j\in \{1,...,m+1-i\}$ as in \eqref{eq:virtual-HOCBFs} such that
\begin{small}
\begin{equation}
\label{eq:highest-SHOCBF}
\begin{split}
\sup_{\nu_{i}\in \mathbb{R}}[L_{F_{i}}^{m+1-i}h_{i}(\boldsymbol{\pi}_{i})+L_{G_{i}}L_{F_{i}}^{m-i}h_{i}(\boldsymbol{\pi}_{i})\nu_{i}+O_{i}(h_{i}(\boldsymbol{\pi}_{i}))\\
+ \alpha_{i,m+1-i}(\varphi_{i,m-i}(\boldsymbol{\pi}_{i}))] \ge \epsilon,
\end{split}
\end{equation}
\end{small}
$\forall\boldsymbol{\pi}_{i}\in \mathcal B_{i,0}\cap,...,\cap \mathcal B_{i,m-i}$. $O_{i}(\cdot)=\sum_{j=1}^{m-i}L_{F_{i}}^{j}(\alpha_{i,m-i}\circ\varphi_{i,m-1-i})(\boldsymbol{\pi}_{i}) $ where $\circ$ denotes the composition of functions. $\epsilon$ is a positive constant which can be infinitely small. 

\begin{remark}
\label{rem:safety-guarantee-2}
If $h_{i}(\boldsymbol{\pi}_{i})$ is a HOCBF illustrated above and $\boldsymbol{\pi}_{i}(0) \in \mathcal {B}_{i,0}\cap \dots \cap \mathcal {B}_{i,m-i},$ then satisfying constraint in \eqref{eq:highest-SHOCBF} is equivalent to making $\varphi_{i,m+1-i}(\boldsymbol{\pi}_{i}(t))\ge \epsilon>0, \forall t\ge 0.$ Based on
\eqref{eq:virtual-HOCBFs}, since $\boldsymbol{\pi}_{i}(0) \in \mathcal {B}_{i,m-i}$ (i.e., $\varphi_{i,m-i}(\boldsymbol{\pi}_{i}(0))>0),$ then we have $\varphi_{i,m-i}(\boldsymbol{\pi}_{i}(t))>0$ (If there exists a $t_{1}\in (0,t_{2}]$, which makes $\varphi_{i,m-i}(\boldsymbol{\pi}_{i}(t_{1}))=0,$ then we have $\dot{\varphi}_{i,m-i}((\boldsymbol{\pi}_{i}(t_{1}))>0\Leftrightarrow \varphi_{i,m-i}(\boldsymbol{\pi}_{i}(t_{1}^{-}))\varphi_{i,m-i}(\boldsymbol{\pi}_{i}(t_{1}^{+}))<0,$ which is against the definition of $\alpha_{i,m+1-i}(\cdot),$ therefore $\forall t_{1}>0, \varphi_{i,m-i}(\boldsymbol{\pi}_{i}(t_{1}))>0,$ note that $t_{1}^{-},t_{1}^{+}$ denote the left and right limit). Based on \eqref{eq:virtual-HOCBFs}, since $\boldsymbol{\pi}_{i}(0) \in \mathcal {B}_{i,m-1-i},$ then similarly we have $\varphi_{i,m-1-i}(\boldsymbol{\pi}_{i}(t))>0,\forall t\ge 0.$ Repeatedly, we have $\varphi_{i,0}(\boldsymbol{\pi}_{i}(t))>0,\forall t\ge 0,$ therefore the sets $\mathcal {B}_{i,0},\dots,\mathcal {B}_{i,m-i}$ are forward invariant.
\end{remark}

For simplicity, we can make $h_{i}(\boldsymbol{\pi}_{i})=\pi_{i,1}(t)=a_{i}(t).$ Based on Rem. \ref{rem:safety-guarantee-2}, each $a_{i}(t)$ will be positive.

The remaining question is how to define an adaptive HOCBF to guarantee $b(\boldsymbol{x})\ge0$ with the assistance of auxiliary variables. Let $\boldsymbol{\Pi}(t)\coloneqq [\boldsymbol{\pi}_{1}(t),\dots,\boldsymbol{\pi}_{m}(t)]^{T}$ and $\boldsymbol{\nu}\coloneqq [\nu_{1},\dots,\nu_{m}]^{T}$ denote the auxiliary states and control inputs of system \eqref{eq:virtual-system}. We can define a sequence of functions 
\begin{small}
\begin{equation}
\label{eq:AVBCBF-sequence}
\begin{split}
&\psi_{0}(\boldsymbol{x},\boldsymbol{\Pi}(t))\coloneqq a_{1}(t)b(\boldsymbol{x}),\\
&\psi_{i}(\boldsymbol{x},\boldsymbol{\Pi}(t))\coloneqq a_{i+1}(t)(\dot{\psi}_{i-1}(\boldsymbol{x},\boldsymbol{\Pi}(t))+\alpha_{i}(\psi_{i-1}(\boldsymbol{x},\boldsymbol{\Pi}(t)))),
\end{split}
\end{equation}
\end{small}
where $i \in \{1,...,m-1\}, \psi_{m}(\boldsymbol{x},\boldsymbol{\Pi}(t))\coloneqq \dot{\psi}_{m-1}(\boldsymbol{x},\boldsymbol{\Pi}(t))+\alpha_{m}(\psi_{m-1}(\boldsymbol{x},\boldsymbol{\Pi}(t))).$ We further define a sequence of sets $\mathcal{C}_{i}$ associated with \eqref{eq:AVBCBF-sequence} in the form 
\begin{equation}
\label{eq:AVBCBF-set}
\begin{split}
\mathcal C_{i}\coloneqq \{(\boldsymbol{x},\boldsymbol{\Pi}(t)) \in \mathbb{R}^{n} \times \mathbb{R}^{m}:\psi_{i}(\boldsymbol{x},\boldsymbol{\Pi}(t))\ge 0\}, 
\end{split}
\end{equation}
where $i \in \{0,...,m-1\}.$
Since $a_{i}(t)$ is a HOCBF with relative degree $m+1-i$ for \eqref{eq:virtual-system}, based on \eqref{eq:highest-SHOCBF}, we define a constraint set $\mathcal{U}_{\boldsymbol{a}}$ for $\boldsymbol{\nu}$ as 
\begin{small}
\begin{equation}
\label{eq:constraint-up}
\begin{split}
\mathcal{U}_{\boldsymbol{a}}(\boldsymbol{\Pi})\coloneqq \{\boldsymbol{\nu}\in\mathbb{R}^{m}:   L_{F_{i}}^{m+1-i}a_{i}+[L_{G_{i}}L_{F_{i}}^{m-i}a_{i}]\nu_{i}\\
+O_{i}(a_{i})+ \alpha_{i,m+1-i}(\varphi_{i,m-i}(a_{i})) \ge \epsilon, i\in \{1,\dots,m\}\},
\end{split}
\end{equation}
\end{small}
where $\varphi_{i,m-i}(\cdot)$ is defined similar to \eqref{eq:virtual-HOCBFs} and $a_{i}(t)$ is ensured positive. $\epsilon$ is a positive constant which can be infinitely small. 

\begin{definition}[AVCBF]
\label{def:AVBCBF}
Let $\psi_{i}(\boldsymbol{x},\boldsymbol{\Pi}(t)),\ i\in \{1,...,m\}$ be defined by \eqref{eq:AVBCBF-sequence} and $\mathcal C_{i},\ i\in \{0,...,m-1\}$ be defined by \eqref{eq:AVBCBF-set}. A function $b(\boldsymbol{x}):\mathbb{R}^{n}\to\mathbb{R}$ is an Auxiliary-Variable Adaptive Control Barrier Function (AVCBF) with relative degree $m$ for system \eqref{eq:affine-control-system} if every $a_{i}(t),i\in \{1,...,m\}$ is a
HOCBF with relative degree $m+1-i$ for the auxiliary system
\eqref{eq:virtual-system}, and there exist $(m-j)^{th}$ order differentiable class $\kappa$ functions $\alpha_{j},j\in \{1,...,m-1\}$
and a class $\kappa$ functions $\alpha_{m}$ s.t.
\begin{small}
\begin{equation}
\label{eq:highest-AVBCBF}
\begin{split}
\sup_{\boldsymbol{u}\in \mathcal{U},\boldsymbol{\nu}\in \mathcal{U}_{\boldsymbol{a}}}[\sum_{j=2}^{m-1}[(\prod_{k=j+1}^{m}a_{k})\frac{\psi_{j-1}}{a_{j}}\nu_{j}] + \frac{\psi_{m-1}}{a_{m}}\nu_{m} \\ +(\prod_{i=2}^{m}a_{i})b(\boldsymbol{x})\nu_{1} +(\prod_{i=1}^{m}a_{i})(L_{f}^{m}b(\boldsymbol{x})+L_{g}L_{f}^{m-1}b(\boldsymbol{x})\boldsymbol{u})\\+R(b(\boldsymbol{x}),\boldsymbol{\Pi})
+ \alpha_{m}(\psi_{m-1})] \ge 0,
\end{split}
\end{equation}
\end{small}
$\forall (\boldsymbol{x},\boldsymbol{\Pi})\in \mathcal C_{0}\cap,...,\cap \mathcal C_{m-1}$ and each $a_{i}>0, i\in\{1,\dots,m\}.$ In \eqref{eq:highest-AVBCBF}, $R(b(\boldsymbol{x}),\boldsymbol{\Pi})$ denotes the remaining Lie derivative terms of $b(\boldsymbol{x})$ (or $\boldsymbol{\Pi}$) along $f$ (or $F_{i},i\in\{1,\dots,m\}$) with degree less than $m$ (or $m+1-i$), which is similar to the form of $O(\cdot )$ in \eqref{eq:highest-HOCBF}.
\end{definition}

\begin{theorem}
\label{thm:safety-guarantee-3}
Given an AVCBF $b(\boldsymbol{x})$ from Def. \ref{def:AVBCBF} with corresponding sets $\mathcal{C}_{0}, \dots,\mathcal {C}_{m-1}$ defined by \eqref{eq:AVBCBF-set}, if $(\boldsymbol{x}(0),\boldsymbol{\Pi}(0)) \in \mathcal {C}_{0}\cap \dots \cap \mathcal {C}_{m-1},$ then if there exists solution of Lipschitz controller $(\boldsymbol{u},\boldsymbol{\nu})$ that satisfies the constraint in \eqref{eq:highest-AVBCBF} and also ensures $(\boldsymbol{x},\boldsymbol{\Pi})\in \mathcal {C}_{m-1}$ for all $t\ge 0,$ then $\mathcal {C}_{0}\cap \dots \cap \mathcal {C}_{m-1}$ will be rendered forward invariant for system \eqref{eq:affine-control-system}, $i.e., (\boldsymbol{x},\boldsymbol{\Pi}) \in \mathcal {C}_{0}\cap \dots \cap \mathcal {C}_{m-1}, \forall t\ge 0.$ Moreover, $b(\boldsymbol{x})\ge 0$ is ensured for all $t\ge 0.$
\end{theorem}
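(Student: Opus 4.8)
The plan is to reduce the statement to showing that the top-level function $\psi_m$ of the sequence \eqref{eq:AVBCBF-sequence} stays nonnegative along the closed loop, and then to run a downward induction through $\psi_m,\psi_{m-1},\dots,\psi_0$. First I would verify that the constraint \eqref{eq:highest-AVBCBF} is nothing but the inequality $\psi_m(\boldsymbol{x},\boldsymbol{\Pi})\ge 0$ written out explicitly. To see this, differentiate the recursion $\psi_m=\dot\psi_{m-1}+\alpha_m(\psi_{m-1})$, $\psi_i=a_{i+1}(\dot\psi_{i-1}+\alpha_i(\psi_{i-1}))$, $\psi_0=a_1 b(\boldsymbol{x})$ along \eqref{eq:affine-control-system} and \eqref{eq:virtual-system}, using the chain of integrators $a_i=\pi_{i,1}$, $\dot\pi_{i,j}=\pi_{i,j+1}$, $\dot\pi_{i,m+1-i}=\nu_i$ that gives $a_i$ relative degree $m+1-i$ with respect to $\nu_i$. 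The control $\boldsymbol{u}$ enters $\psi_m$ only through $L_gL_f^{m-1}b(\boldsymbol{x})$, with coefficient $\prod_{i=1}^m a_i$; the auxiliary input $\nu_1=a_1^{(m)}$ enters only through $(\prod_{i=2}^m a_i)b(\boldsymbol{x})\nu_1$, $\nu_m=\dot a_m$ only through $\frac{\psi_{m-1}}{a_m}\nu_m$, and each intermediate $\nu_j=a_j^{(m+1-j)}$ ($2\le j\le m-1$) only through $(\prod_{k=j+1}^m a_k)\frac{\psi_{j-1}}{a_j}\nu_j$; the remaining lower-order Lie-derivative and cross terms are collected into $R(b(\boldsymbol{x}),\boldsymbol{\Pi})$, exactly as $O(\cdot)$ plays its role in Def.~\ref{def:HOCBF}. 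Hence a controller satisfies \eqref{eq:highest-AVBCBF} if and only if $\psi_m(\boldsymbol{x}(t),\boldsymbol{\Pi}(t))\ge 0$.

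Next I would establish that the auxiliary variables stay strictly positive and then run the downward induction. By Def.~\ref{def:AVBCBF}, each $a_i$ is a HOCBF with relative degree $m+1-i$ for \eqref{eq:virtual-system} with $h_i(\boldsymbol{\pi}_i)=\pi_{i,1}=a_i$; since any admissible $\boldsymbol{\nu}$ lies in $\mathcal{U}_{\boldsymbol{a}}(\boldsymbol{\Pi})$ of \eqref{eq:constraint-up}, it satisfies \eqref{eq:highest-SHOCBF} for every $i$, so taking $\boldsymbol{\pi}_i(0)\in\mathcal{B}_{i,0}\cap\dots\cap\mathcal{B}_{i,m-i}$ as the standing choice of auxiliary initial conditions, Rem.~\ref{rem:safety-guarantee-2} gives $a_i(t)=\varphi_{i,0}(\boldsymbol{\pi}_i(t))>0$ for all $t\ge 0$ and all $i\in\{1,\dots,m\}$. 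Now, by the reduction above, $\psi_m(\boldsymbol{x}(t),\boldsymbol{\Pi}(t))\ge 0$ for all $t\ge 0$, and by hypothesis $(\boldsymbol{x}(t),\boldsymbol{\Pi}(t))\in\mathcal{C}_{m-1}$, i.e.\ $\psi_{m-1}(t)\ge 0$, for all $t$ (this would also follow from $\psi_m\ge 0$, $\psi_{m-1}(0)\ge 0$ and the comparison step below, but it is assumed directly). Assuming $\psi_i(t)\ge 0$ for all $t$ for some $i\in\{1,\dots,m-1\}$: from \eqref{eq:AVBCBF-sequence}, $\psi_i=a_{i+1}(\dot\psi_{i-1}+\alpha_i(\psi_{i-1}))$ with $a_{i+1}(t)>0$, so $\dot\psi_{i-1}(t)\ge-\alpha_i(\psi_{i-1}(t))$ for all $t$; since $\alpha_i$ is class $\kappa$ and $\psi_{i-1}(0)\ge 0$ because $(\boldsymbol{x}(0),\boldsymbol{\Pi}(0))\in\mathcal{C}_{i-1}$, the comparison/Nagumo argument used to prove Thm.~\ref{thm:safety-guarantee} yields $\psi_{i-1}(t)\ge 0$ for all $t$, i.e.\ $\mathcal{C}_{i-1}$ is forward invariant. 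Iterating down to $i=1$ gives $\psi_0(t)=a_1(t)b(\boldsymbol{x}(t))\ge 0$ for all $t$, and dividing by $a_1(t)>0$ gives $b(\boldsymbol{x}(t))\ge 0$; collecting the invariance of every $\mathcal{C}_i$ proves forward invariance of $\mathcal{C}_0\cap\dots\cap\mathcal{C}_{m-1}$.

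The hard part will be the identity in the first step: after the $m$ differentiations needed to expose $\boldsymbol{u}$, one must check that the control and all auxiliary inputs appear affinely and with precisely the coefficients appearing in \eqref{eq:highest-AVBCBF}, and that no further $\boldsymbol{u}$- or $\boldsymbol{\nu}$-dependent term is hidden inside $R(b(\boldsymbol{x}),\boldsymbol{\Pi})$ — this bookkeeping is what justifies treating each time step as a QP. A minor point is that the comparison step needs $\psi_{i-1}$ to be at least $C^1$ (hence absolutely continuous) along trajectories, which holds because $b$ has relative degree $m$, the $\alpha_i$ are sufficiently differentiable, and $\boldsymbol{u},\boldsymbol{\nu}$ are Lipschitz; one should also record that the whole argument is conditional on global existence of the closed-loop solution, which is implied by the hypothesized existence of the Lipschitz controller on $[0,\infty)$.
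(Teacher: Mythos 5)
Your proof follows essentially the same route as the paper's: take $\psi_{m-1}\ge 0$ for all $t\ge 0$ directly from the hypothesis, use positivity of each $a_i$ (secured by the HOCBFs on the auxiliary systems) to divide it out of $\psi_i=a_{i+1}(\dot\psi_{i-1}+\alpha_i(\psi_{i-1}))$, and descend via the comparison argument of Rem.~\ref{rem:safety-guarantee-2} to $\psi_0\ge 0$ and hence $b(\boldsymbol{x})=\psi_0/a_1\ge 0$. The one aside to strike is your parenthetical claim that $\psi_{m-1}\ge 0$ ``would also follow from $\psi_m\ge 0$, $\psi_{m-1}(0)\ge 0$ and the comparison step'': Rem.~\ref{rem: PACBF-AVBCBF} asserts precisely the opposite (the unbounded growth of the auxiliary inputs is why satisfying $\psi_m\ge 0$ does not guarantee $\psi_{m-1}\ge 0$), which is the reason the theorem carries $(\boldsymbol{x},\boldsymbol{\Pi})\in\mathcal{C}_{m-1}$ for all $t\ge 0$ as an explicit hypothesis --- since you in fact invoke that hypothesis directly, the body of your argument is unaffected.
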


\begin{proof}
If $b(\boldsymbol{x})$ is an AVCBF that is $m^{th}$ order differentiable, then satisfying constraint in \eqref{eq:highest-AVBCBF} while ensuring $(\boldsymbol{x},\boldsymbol{\Pi})\in \mathcal {C}_{m-1}$ for all $t\ge 0$ is equivalent to make $\psi_{m-1}(\boldsymbol{x},\boldsymbol{\Pi})\ge 0, \forall t\ge 0.$ Since $a_{m}(t)>0$, we have $\frac{\psi_{m-1}(\boldsymbol{x},\boldsymbol{\Pi})}{a_{m}(t)}\ge 0.$ Based on
\eqref{eq:AVBCBF-sequence}, since $(\boldsymbol{x}(0),\boldsymbol{\Pi}(0)) \in \mathcal {C}_{m-2}$ (i.e., $\frac{\psi_{m-2}(\boldsymbol{x}(0),\boldsymbol{\Pi}(0))}{a_{m-1}(0)}\ge 0),a_{m-1}(t)>0,$ then we have $\psi_{m-2}(\boldsymbol{x},\boldsymbol{\Pi})\ge 0$ (The proof of this is similar to the proof in Rem. \ref{rem:safety-guarantee-2}), and also $\frac{\psi_{m-2}(\boldsymbol{x},\boldsymbol{\Pi})}{a_{m-1}(t)}\ge 0.$ Based on \eqref{eq:AVBCBF-sequence}, since $(\boldsymbol{x}(0),\boldsymbol{\Pi}(0)) \in \mathcal {C}_{m-3},a_{m-2}(t)>0$ then similarly we have $\psi_{m-3}(\boldsymbol{x},\boldsymbol{\Pi})\ge 0$ and $\frac{\psi_{m-3}(\boldsymbol{x},\boldsymbol{\Pi})}{a_{m-2}(t)}\ge 0,\forall t\ge 0.$ Repeatedly, we have $\psi_{0}(\boldsymbol{x},\boldsymbol{\Pi})\ge 0$ and $\frac{\psi_{0}(\boldsymbol{x},\boldsymbol{\Pi})}{a_{1}(t)}\ge 0,\forall t\ge 0.$ Therefore the sets $\mathcal {C}_{0},\dots,\mathcal {C}_{m-1}$ are forward invariant and $b(\boldsymbol{x})=\frac{\psi_{0}(\boldsymbol{x},\boldsymbol{\Pi})}{a_{1}(t)}\ge 0$ is ensured for all $t\ge 0$.
\end{proof}
Based on Thm. \ref{thm:safety-guarantee-3}, the safety regarding $b(\boldsymbol{x})=\frac{\psi_{0}(\boldsymbol{x},\boldsymbol{\Pi})}{a_{1}(t)}\ge 0$ is guaranteed.

\begin{remark}[Limitation of Approaches with Auxiliary Inputs]
\label{rem: PACBF-AVBCBF} 
Ensuring the satisfaction of the $i^{th}$ order AVCBF constraint as shown in \eqref{eq:AVBCBF-set} when $i\in\{1,\dots,m-1\},$ i.e., $\psi_{i}(\boldsymbol{x},\boldsymbol{\Pi})\ge 0$ will guarantee $\psi_{i-1}(\boldsymbol{x},\boldsymbol{\Pi})\ge 0$ based on the proof of Thm. \ref{thm:safety-guarantee-3}, which theoretically outperforms PACBF. However, both approaches can not ensure satisfying $\psi_{m}(\boldsymbol{x},\boldsymbol{\Pi})\ge 0$ will guarantee $\psi_{m-1}(\boldsymbol{x},\boldsymbol{\Pi})\ge 0$ since the growth of $\boldsymbol{\nu}_{i}$ is unbounded. Therefore in Thm. \ref{thm:safety-guarantee-3}, $(\boldsymbol{x},\boldsymbol{\Pi})\in \mathcal {C}_{m-1}$ for all $t\ge 0$ also needs to be satisfied to guarantee the forward invariance of the intersection of sets. 
\end{remark}

\subsection{Optimal Control with AVCBFs}
\label{subsec: optimal-control}
Consider an optimal control problem as
\begin{small}
\begin{equation}
\label{eq:cost-function-1}
\begin{split}
 \min_{\boldsymbol{u}} \int_{0}^{T} 
 D(\left \| \boldsymbol{u} \right \| )dt,
\end{split}
\end{equation}
\end{small}
where $\left \| \cdot \right \|$ denotes the 2-norm of a vector, $D(\cdot)$ is a strictly increasing function of its argument and $T>0$ denotes the ending time. Since we need to introduce auxiliary inputs $v_{i}$ to enhance the feasibility of optimization, we should reformulate the cost in \eqref{eq:cost-function-1} as
\begin{small}
\begin{equation}
\label{eq:cost-function-2}
\begin{split}
 \min_{\boldsymbol{u},\boldsymbol{\nu}} \int_{0}^{T} 
 [D(\left \| \boldsymbol{u} \right \| )+\sum_{i=1}^{m}W_{i}(\nu_{i}-a_{i,w})^{2}]dt.
\end{split}
\end{equation}
\end{small}
In \eqref{eq:cost-function-2}, $W_{i}$ is a positive scalar and $a_{i,w}\in \mathbb{R}$ is the scalar to which we hope each auxiliary input $\nu_{i}$ converges. Both are chosen to tune the performance of the controller. We can formulate the CLFs, HOCBFs and AVCBFs introduced in Def. \ref{def:control-l-f}, Sec. \ref{sec:AVCBFs} and Def. \ref{def:AVBCBF} as constraints of the QP with cost function \eqref{eq:cost-function-2} to realize safety-critical control. Next we will show AVCBFs can be used to enhance the feasibility of solving QP compared with classical HOCBFs in Def. \ref{def:HOCBF}.

In auxiliary system \eqref{eq:virtual-system}, if we define $a_{i}(t)=\pi_{i,1}(t)=1, \dot{\pi}_{i,1}(t)=\dot{\pi}_{i,2}(t)=\cdots=\dot{\pi}_{i,m+1-i}(t)=0,$ then the way we construct functions and sets in \eqref{eq:virtual-HOCBFs} and \eqref{eq:virtual-sets} are exactly the same as \eqref{eq:sequence-f1} and \eqref{eq:sequence-set1}, which means classical HOCBF is in fact one specific case of AVCBF. Assume that the highest order HOCBF constraint \eqref{eq:highest-HOCBF} conflicts with control input constraints \eqref{eq:control-constraint} at $t=t_{b},$ i.e., we can not find a feasible controller $u(t_{b})$ to satisfy \eqref{eq:highest-HOCBF} and \eqref{eq:control-constraint}. Instead, starting from a time slot $t=t_{a}$ which is just before $t=t_{b}$ ($t_{b}-t_{a}=\varepsilon$ where $\varepsilon$ is an infinitely small positive value), we exchange the control framework of classical HOCBF into AVCBF instantly. Suppose we can find appropriate hyperparameters to ensure two constraints in \eqref{eq:constraint-up} and \eqref{eq:highest-AVBCBF}
are satisfied given $\boldsymbol{u}$ constrained by \eqref{eq:control-constraint} at $t_{b},$ then there exists solution $\boldsymbol{u}(t_{b})$ for the optimal control problem and the feasibility of solving QP is enhanced. Relying on AVCBF, We can discretize the whole time period $[0,T]$ into several small time intervals like $[t_{a},t_{b}]$ to maximize the feasibility of solving QP under safety constraints, which calls for the development of automatic parameter-tuning techniques in future.

Besides safety and feasibility, another benefit of using AVCBFs is that the conservativeness of the control strategy can also be ameliorated. For example, from \eqref{eq:AVBCBF-sequence}, we can rewrite $\psi_{i}(\boldsymbol{x},\boldsymbol{\Pi})\ge 0$ as
\begin{equation}
\label{eq:AVCBF-rewrite}
\begin{split}
\dot{\phi}_{i-1}(\boldsymbol{x},\boldsymbol{\Pi})+k_{i}(1+\frac{\dot{a}_{i}(t)}{k_{i}a_{i}(t)}) \phi_{i-1}(\boldsymbol{x},\boldsymbol{\Pi})\ge0,
\end{split}
\end{equation}
where $\phi_{i-1}(\boldsymbol{x},\boldsymbol{\Pi})=\frac{\psi_{i-1}(\boldsymbol{x},\boldsymbol{\Pi})}{a_{i}(t)},\alpha_{i}(\psi_{i-1}(\boldsymbol{x},\boldsymbol{\Pi}))=k_{i}a_{i}(t)\phi_{i-1}(\boldsymbol{x},\boldsymbol{\Pi}), k_{i}>0, i\in \{1,\dots,m\}.$ Similar to PACBFs, we require $1+\frac{\dot{a}_{i}(t)}{k_{i}a_{i}(t)}\ge0,$ which gives us $\dot{a}_{i}(t)+k_{i}a_{i}(t)\ge0.$
The term $\frac{\dot{a}_{i}(t)}{a_{i}(t)}$ can be adjusted adaptable  to ameliorate the conservativeness of control strategy that $k_{i}\phi_{i-1}(\boldsymbol{x},\boldsymbol{\Pi})$ may have, i.e., the ego vehicle can brake earlier or later given time-varying control constraint $\boldsymbol{u}_{min}(t)\le \boldsymbol{u} \le\boldsymbol{u}_{max}(t),$ which confirms the adaptivity of AVCBFs to control constraint and conservativeness of control strategy. 

\begin{remark}[Parameter-Tuning for AVCBFs]
\label{rem: parameter-tuning}
Based on the analysis of \eqref{eq:AVCBF-rewrite}, we require $\dot{a}_{i}(t)+k_{i}a_{i}(t)\ge0.$ If we define first order HOCBF constraint for $a_{i}(t)>0$ as $\dot{a}_{i}(t)+l_{i}a_{i}(t)\ge0,$ we should choose hyperparameter $l_{i}\le k_{i}$ to guarantee $\dot{a}_{i}(t)+k_{i}a_{i}(t)\ge\dot{a}_{i}(t)+l_{i}a_{i}(t)\ge 0.$ For simplicity, we can use $l_{i}=k_{i}.$ In cost function \eqref{eq:cost-function-2}, we can tune hyperparameters $W_{i}$ and $a_{i,w}$ to adjust the corresponding ratio $\frac{\dot{a}_{i}(t)}{a_{i}(t)}$ to change the performance of the optimal controller.
\end{remark}

\begin{remark}
\label{rem: sufficient-con}
Note that the satisfaction of the constraint in \eqref{eq:highest-AVBCBF} is a sufficient condition for the satisfaction of the original constraint $\psi_{0}(\boldsymbol{x},\boldsymbol{\Pi})>0,$ it is not necessary to introduce auxiliary variables as many as from $a_{1}(t)$ to $a_{m}(t),$ which allows us to choose an appropriate
number of auxiliary variables for the AVCBF constraints to reduce the complexity. In other words, the number of auxiliary variables can be less than or equal to the relative degree $m$.
\end{remark}
\section{ACC Problem Formulation}
\label{sec:ACC-Problem}

In this section, we consider the Adaptive Cruise Control (ACC) problem, which is more realistic than the SACC problem introduced in Sec. \ref{subsec:SACC-problem} and was investigated in case study in \cite{ames2016control}, \cite{xiao2021adaptive}.
\subsection{Vehicle Dynamics}
We consider a nonlinear vehicle dynamics in the form
\begin{small}
\begin{equation}
\label{eq:ACC-dynamics}
\underbrace{\begin{bmatrix}
\dot{z}(t) \\
\dot{v}(t) 
\end{bmatrix}}_{\dot{\boldsymbol{x}}(t)}  
=\underbrace{\begin{bmatrix}
 v_{p}-v(t) \\
 -\frac{1}{M}F_{r}(v(t))
\end{bmatrix}}_{f(\boldsymbol{x}(t))} 
+ \underbrace{\begin{bmatrix}
  0 \\
  \frac{1}{M} 
\end{bmatrix}}_{g(\boldsymbol{x}(t))}u(t)
\end{equation}
\end{small}
where $M$ denotes the mass of the ego vehicle and $v_{p}>0$ denotes the velocity of the lead vehicle. $z(t)$ denotes the distance between two vehicles and $F_{r}(v(t))=f_{0}sgn(v(t))+f_{1}v(t)+f_{2}v^{2}(t)$ denotes the resistance force as in \cite{Khalil:1173048}, where $f_{0},f_{1},f_{2}$ are positive scalars determined empirically and $v(t)>0$ denotes the velocity of the ego vehicle. The first term in $F_{r}(t)$ denotes the Coulomb friction force, the second term denotes the viscous friction force and the last term denotes the aerodynamic drag.
\subsection{Vehicle Limitations}
Vehicle limitations include vehicle constraints on safe distance, speed and acceleration.

\textbf{Safe distance constraint:} The distance is considered safe if $z(t)\ge l_{p}$ is satisfied $\forall t \in [0,T]$, where $l_{p}$ denotes the minimum distance two vehicles should maintain.

\textbf{Speed constraint:} The ego vehicle should achieve a desired speed  $v_{d}>0.$

\textbf{Acceleration constraint:} The ego vehicle should minimize the following cost
\begin{small}
\begin{equation}
\label{eq:minimal-u}
\min_{u(t)} \int_{0}^{T}(\frac{u(t)-F_{r}(v(t))}{M})^{2}dt 
\end{equation}
\end{small}
when the acceleration is constrained in the form 
\begin{equation}
\label{eq:constraint-u}
-c_{d}(t)Mg\le u(t) \le c_{a}(t)Mg, \forall t \in [0,T], 
\end{equation}
where $g$ denotes the gravity constant, $c_{d}(t)>0$ and $c_{a}(t)>0$ are deceleration and acceleration coefficients respectively.
\begin{problem}
\label{prob:ACC-prob}
Determine the optimal controller for the ego vehicle governed by dynamics \eqref{eq:ACC-dynamics} while subject to the vehicle constraints on safe distance, speed and acceleration. 
\end{problem}

To satisfy the constraint on speed, we define a CLF $V(\boldsymbol{x}(t)) \coloneqq(v(t)-v_{d})^{2}$ with $c_{1}=c_{2}=1$ to stabilize $v(t)$ to $v_{d}$ and formulate the relaxed constraint in \eqref{eq:clf} as
\begin{equation}
\label{eq:ACC-clf}
L_{f}V(\boldsymbol{x}(t))+L_{g}V(\boldsymbol{x}(t))u(t)+c_{3}V(\boldsymbol{x}(t))\le \delta(t), 
\end{equation}
where $\delta(t)$ is a relaxation that makes \eqref{eq:ACC-clf} a soft constraint.

To satisfy the constraints on safety distance and acceleration, we will define a continuous function $b(\boldsymbol{x}(t))=z(t)-l_{p}$ as AVCBF or PACBF to guarantee $b(\boldsymbol{x}(t))\ge 0$ and constraint \eqref{eq:constraint-u}, then formulate all constraints into QP to get the optimal controller.
\section{Implementation and Results}
\label{sec:Implementation}
In this section, we show how our proposed AVCBFs will provide the adaptivity and outperform the PACBFs in solving the optimal ACC problem under conservative control constraints. We consider the Prob. \ref{prob:ACC-prob} with time-varying control bounds \eqref{eq:constraint-u} (due to smoothness of vehicle tires and road surfaces), and implement AVCBFs or PACBFs as safety constraints for solving Prob. \ref{prob:ACC-prob} in MATLAB. We utilized ode45 to integrate the dynamic system for every $0.1s$ time-interval and quadprog to solve QP. Both methods show varying degrees of adaptivity to complexity of road conditions.

The parameters are $v_{p}=13.89m/s, v_{d}=24m/s, M=1650kg, g=9.81m/s^{2},z(0)=100m, l_{p}=10m, f_{0}=0.1N, f_{1}=5Ns/m, f_{2}=0.25Ns^{2}/m, c_{a}(t)=0.4.$
\subsection{Implementation with AVCBFs}
Define $b(\boldsymbol{x}(t))=z(t)-l_{p},$ the relative degree of $b(\boldsymbol{x}(t))$ with respect to dynamics \eqref{eq:ACC-dynamics} is 2. For simplicity, based on Rem. \ref{rem: sufficient-con}, we just introduce one auxiliary variable as $\boldsymbol{a}(t)=a_{1}(t).$ 
Motivated by Sec. \ref{sec:AVCBFs}, we define the auxiliary dynamics as
\begin{small}
\begin{equation}
\label{eq:Auxiliary-dynamics1}
\underbrace{\begin{bmatrix}
\dot{a_{1}}(t) \\
\dot{\pi}_{1,2}(t) 
\end{bmatrix}}_{\dot{\boldsymbol{\pi}}_{1}(t)}  
=\underbrace{\begin{bmatrix}
 \pi_{1,2}(t) \\
 0
\end{bmatrix}}_{F_{1}(\boldsymbol{{\pi}}_{1}(t))} 
+ \underbrace{\begin{bmatrix}
  0 \\
  1 
\end{bmatrix}}_{G_{1}(\boldsymbol{{\pi}}_{1}(t))}\nu_{1}(t).
\end{equation}
\end{small}
The HOCBFs for $a_{1}(t)$ are defined as 
\begin{equation}
\label{eq:SHOCBF-sequence-ACC}
\begin{split}
&\varphi_{1,0}(\boldsymbol{{\pi}}_{1}(t))\coloneqq a_{1}(t),\\
&\varphi_{1,1}(\boldsymbol{{\pi}}_{1}(t))\coloneqq \dot{\varphi}_{1,0}(\boldsymbol{{\pi}}_{1}(t))+l_{1}\varphi_{1,0}(\boldsymbol{{\pi}}_{1}(t)),\\
&\varphi_{1,2}(\boldsymbol{{\pi}}_{1}(t))\coloneqq \dot{\varphi}_{1,1}(\boldsymbol{{\pi}}_{1}(t))+l_{2}\varphi_{1,1}(\boldsymbol{{\pi}}_{1}(t)),
\end{split}
\end{equation}
where $\alpha_{1,1}(\cdot),\alpha_{1,2}(\cdot)$ are defined as linear functions. The AVCBFs are then defined as
\begin{equation}
\label{eq:AVBCBF-sequence-ACC}
\begin{split}
&\psi_{0}(\boldsymbol{x},\boldsymbol{{\pi}}_{1}(t))\coloneqq a_{1}(t)b(\boldsymbol{x}),\\
&\psi_{1}(\boldsymbol{x},\boldsymbol{{\pi}}_{1}(t))\coloneqq \dot{\psi}_{0}(\boldsymbol{x},\boldsymbol{{\pi}}_{1}(t))+k_{1}\psi_{0}(\boldsymbol{x},\boldsymbol{{\pi}}_{1}(t)),\\
&\psi_{2}(\boldsymbol{x},\boldsymbol{{\pi}}_{1}(t))\coloneqq \dot{\psi}_{1}(\boldsymbol{x},\boldsymbol{{\pi}}_{1}(t))+k_{2}\psi_{1}(\boldsymbol{x},\boldsymbol{{\pi}}_{1}(t)),
\end{split}
\end{equation}
where $\alpha_{1}(\cdot),\alpha_{2}(\cdot)$ are set as linear functions. By formulating constraints from HOCBFs \eqref{eq:SHOCBF-sequence-ACC}, AVCBFs \eqref{eq:AVBCBF-sequence-ACC}, CLF \eqref{eq:ACC-clf} and acceleration \eqref{eq:constraint-u}, we can define cost function 
 for QP as
 \begin{small}
\begin{equation}
\label{eq:AVBCBF-cost}
\begin{split}
\min_{u(t),\nu_{1}(t),\delta(t)} \int_{0}^{T}[(\frac{u(t)-F_{r}(v(t))}{M})^{2}\\+W_{1}(\nu_{1}(t)-a_{1,w})^{2}+Q\delta(t)^{2}]dt.
\end{split}
\end{equation}
\end{small}
Other parameters are set as $v(0)=6m/s, a_{1}(0)=1, \pi_{1,2}(0)=1, c_{3}=2, W_{1}=Q=1000,\epsilon=10^{-10}.$
\begin{figure}[ht]
    \centering
    \includegraphics[scale=0.58]{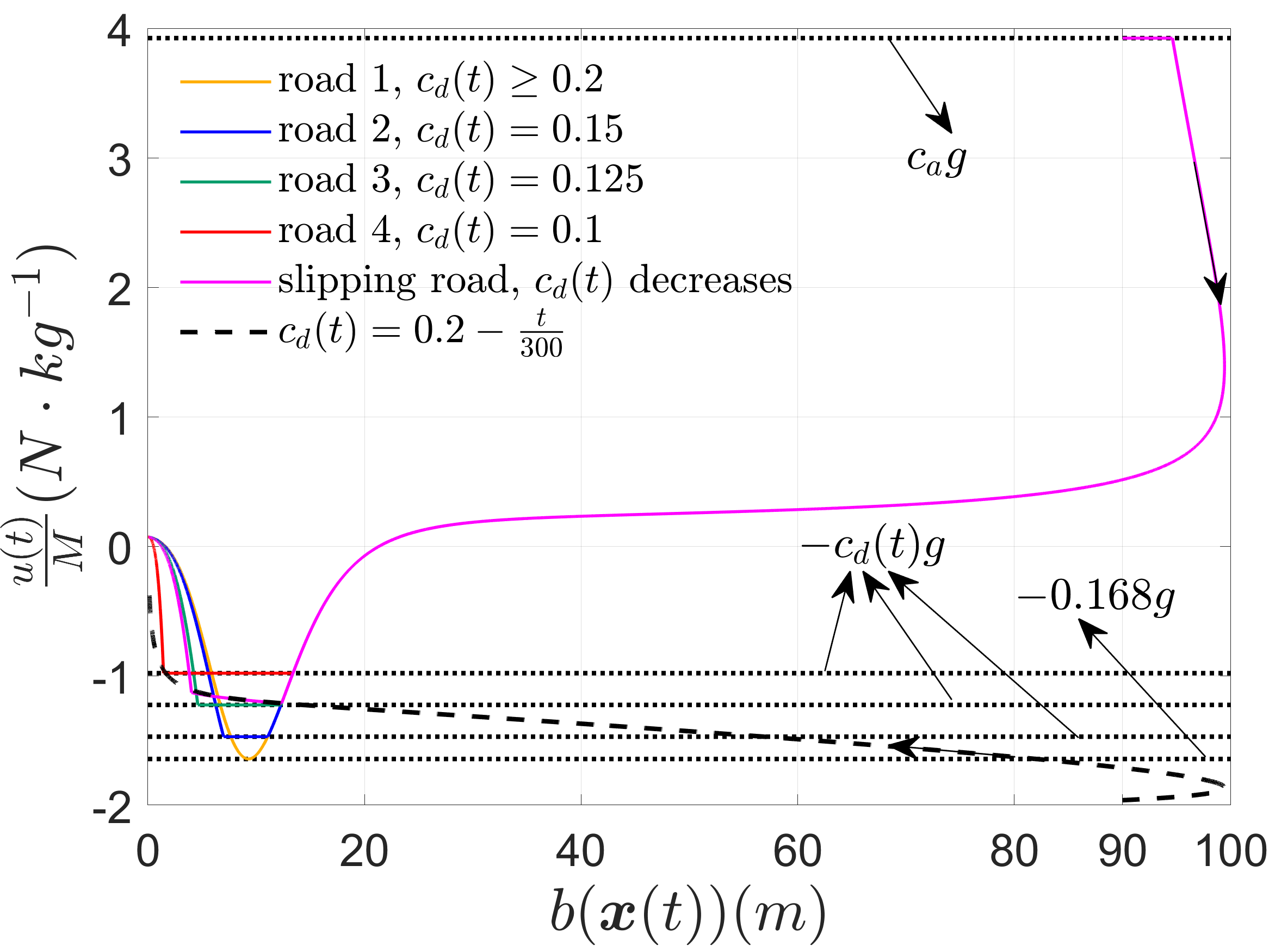}
    \caption{Control input $u(t)$ varies as $b(\boldsymbol{x(t)})$ goes to 0 under different lower control bounds. The arrows denote the changing trend for $b(\boldsymbol{x(t)})$ and $c_{d}(t)$ over time. $b(\boldsymbol{x(0)})=90$ and $b(\boldsymbol{x(t)})\ge 0$ implies safety. Hyperparameters are set as $k_{1}=k_{2}=l_{1}=l_{2}=0.1, a_{1,w}=1,T=50s.$ }
    \label{fig:AVBCBFs-braking}
\end{figure} 
We first test the adaptivity to deceleration by changing the lower control bound $-c_{d}(t)Mg.$ In each test in Fig. \ref{fig:AVBCBFs-braking}, we set deceleration coefficient $c_{d}(t)$ to different constant or linearly decreasing variable due to different road conditions. For each case, the ego vehicle first accelerates to the same velocity ($b(\boldsymbol{x(t)})$ increases to the same value), then starts to decelerate at the same time. Due to different braking capability, the ego vehicle reaches different maximum deceleration (denoted by arrows) and finally keeps a constant velocity the same as $v_{p}$. The safe distance $l_{p}$ is maintained for all $t\in[0,50s].$ Note that under poor road conditions (e.g., the road is very slippery or the smoothness of road varies), shown by red or magenta curves, QPs are still feasible by using AVCBFs method, which shows good adaptivity to control constraints. 
\begin{figure}[ht]
    \centering
    \includegraphics[scale=0.58]{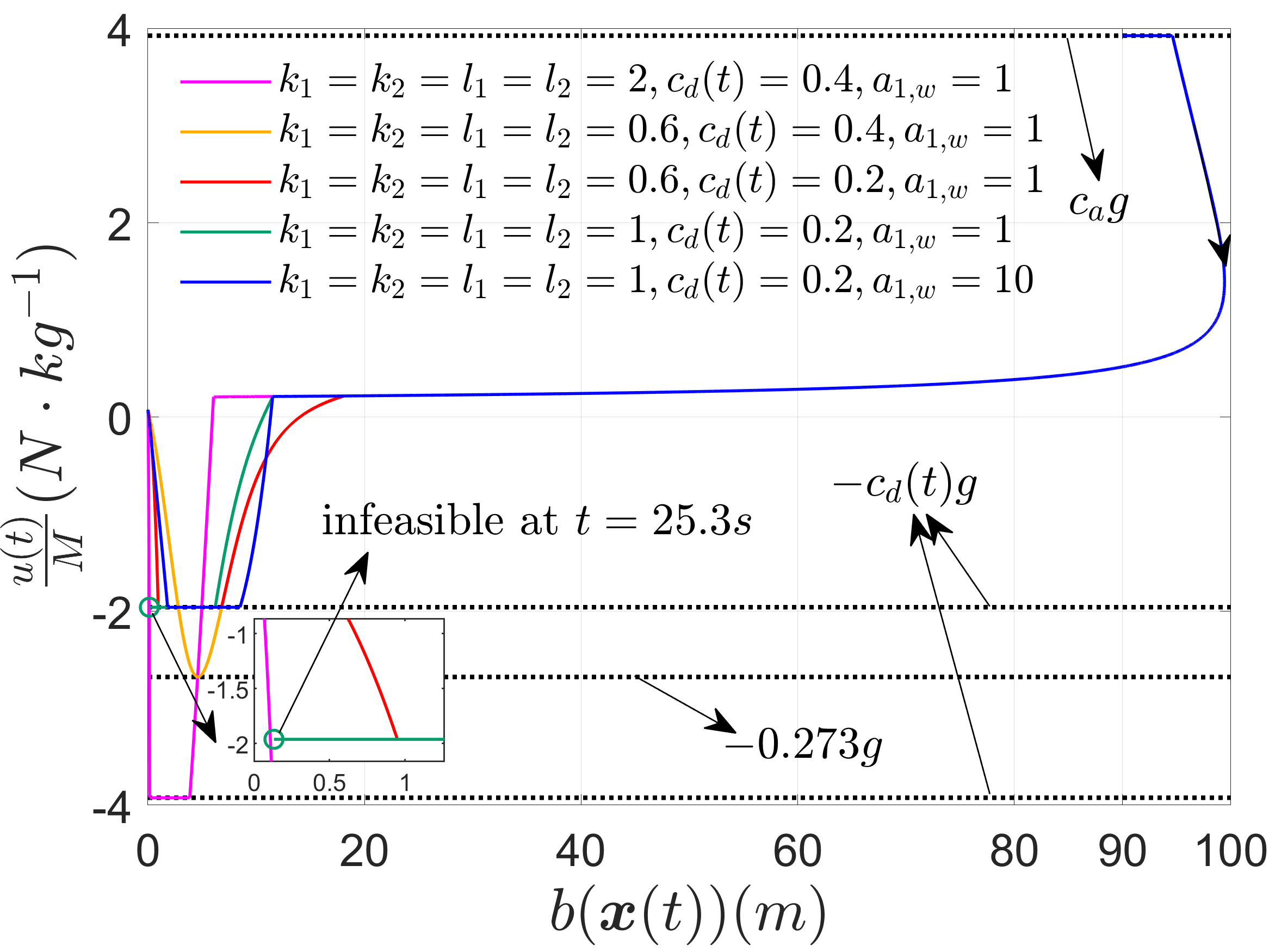}
    \caption{Control input $u(t)$ varies as $b(\boldsymbol{x(t)})$ goes to 0 under different lower control bounds. The arrows denote the changing trend for $b(\boldsymbol{x(t)})$ and $c_{d}(t)$ over 50 seconds. $b(\boldsymbol{x(0)})=90$ and $b(\boldsymbol{x(t)})\ge 0$ implies safety. Different sets of hyperparameters for class $\kappa$ functions are tested.}
    \label{fig:AVBCBFs-hyperparameters}
\end{figure} 

Next, we test the adaptivity to conservativeness of control strategy by changing the hyperparameters $k_{1},k_{2},l_{1},l_{2}$ inside the class $\kappa$ functions in \eqref{eq:SHOCBF-sequence-ACC},\eqref{eq:AVBCBF-sequence-ACC}. We also change $c_{d}(t)$ for different hyperparameters. For each case in Fig. \ref{fig:AVBCBFs-hyperparameters}, the ego car first accelerates to the same velocity ($b(\boldsymbol{x(t)})$ increases to the same value), then starts to decelerate at different time (the orange and red curves represent breaking earliest, while the green and blue curves represent breaking later and the magenta curve represents breaking latest). Due to different braking capability, the ego vehicle reaches different maximum deceleration (denoted by arrows) and finally keeps a constant velocity the same as $v_{p}$. The safe distance $l_{p}$ is maintained for all $t\in[0,50s].$ Note that from comparing magenta curve with orange curve, larger hyperparameters allow the ego vehicle to brake later and to reach larger deceleration, but if the $c_{d}(t)$ is very small (like green curve), larger hyperparameters always cause infeasibility of QPs (the ego vehicle does not have enough long distance to brake down safely, therefore infeasible at 25.3s). We can make ego vehicle brake faster (as blue curve shows) by adjusting $a_{1,w}$ in cost function \eqref{eq:AVBCBF-cost}, which shows good adaptivity of AVCBFs to managing control strategy.

\subsection{Implementation with PACBFs}

Similar to AVCBFs, we define $b(\boldsymbol{x}(t))=z(t)-l_{p}$ for PACBFs. We use the same penalty function and auxiliary dynamics in \cite{xiao2021adaptive} as $\boldsymbol{p}(t)=(p_{1}(t), p_{2}(t))$ and
\begin{equation}
\label{eq:AVBCBFs-PACBFs-1}
\begin{split}
\dot{p_{1}}(t)=\nu_{1}(t),\\
p_{2}(t)=\nu_{2}(t).
\end{split}
\end{equation}
To make $p_{1}(t)$ converge to a small enough value, we define CLF constraint as
\begin{equation}
\label{eq:HOCBFs-CLFs}
\begin{split}
2(p_{1}(t)-p_{1}^{\ast })\nu_{1}(t)+\rho (p_{1}(t)-p_{1}^{\ast})^{2}\le \delta_{p}(t),
\end{split}
\end{equation}
where $\rho=10, p_{1}^{\ast}=0.103, \delta_{p}(t)$ is relaxed variable. We define HOCBF constraints for $p_{1}(t)$ as 
\begin{equation}
\label{eq:Auxiliary-PACBFs}
\begin{split}
-\nu_{1}(t)+(3-p_{1}(t))\ge0,\\
\nu_{1}(t)+p_{1}(t)\ge0,
\end{split}
\end{equation}
which confines $p_{1}(t)$ into $[0,3].$ The PACBFs are then defined as 
\begin{equation}
\label{eq:AVBCBFs-PACBFs-2}
\begin{split}
&\psi_{0}(\boldsymbol{x})\coloneqq b(\boldsymbol{x}),\\
&\psi_{1}(\boldsymbol{x},\boldsymbol{p}(t))\coloneqq \dot{\psi}_{0}(\boldsymbol{x})+p_{1}(t)\psi_{0}(\boldsymbol{x})^{2},\\
&\psi_{2}(\boldsymbol{x},\boldsymbol{p}(t),\boldsymbol{\nu}(t))\coloneqq \dot{\psi}_{1}(\boldsymbol{x},\boldsymbol{p}(t))+\nu_{2}(t)\psi_{1}(\boldsymbol{x},\boldsymbol{p}(t))
\end{split}
\end{equation}
to guarantee the safety. By formulating constraints from HOCBFs \eqref{eq:Auxiliary-PACBFs}, CLFs \eqref{eq:HOCBFs-CLFs}\eqref{eq:ACC-clf}, PACBFs \eqref{eq:AVBCBFs-PACBFs-2} and acceleration \eqref{eq:constraint-u}, we can define cost function 
 for QP as
 \begin{small}
\begin{equation}
\label{eq:PACBF-cost}
\begin{split}
\min_{u(t),\nu_{1}(t),\nu_{2}(t),\delta(t),\delta_{p}(t)} \int_{0}^{T}[(\frac{u(t)-F_{r}(v(t))}{M})^{2}+W_{1}\nu_{1}(t)\\+W_{2}(\nu_{2}(t)-1)^{2}+Q\delta(t)^{2}+Q_{p}\delta_{p}(t)^{2}]dt. 
\end{split}
\end{equation}
\end{small}
Other parameters are set as $p_{1}(0)=0.103, p_{2}(0)=1, c_{3}=10, W_{1}=W_{2}=2e^{12},Q=Q_{p}=1.$
\subsection{Comparison between AVCBFs and PACBFs}
\begin{figure}[ht]
    \centering
    \includegraphics[scale=0.58]{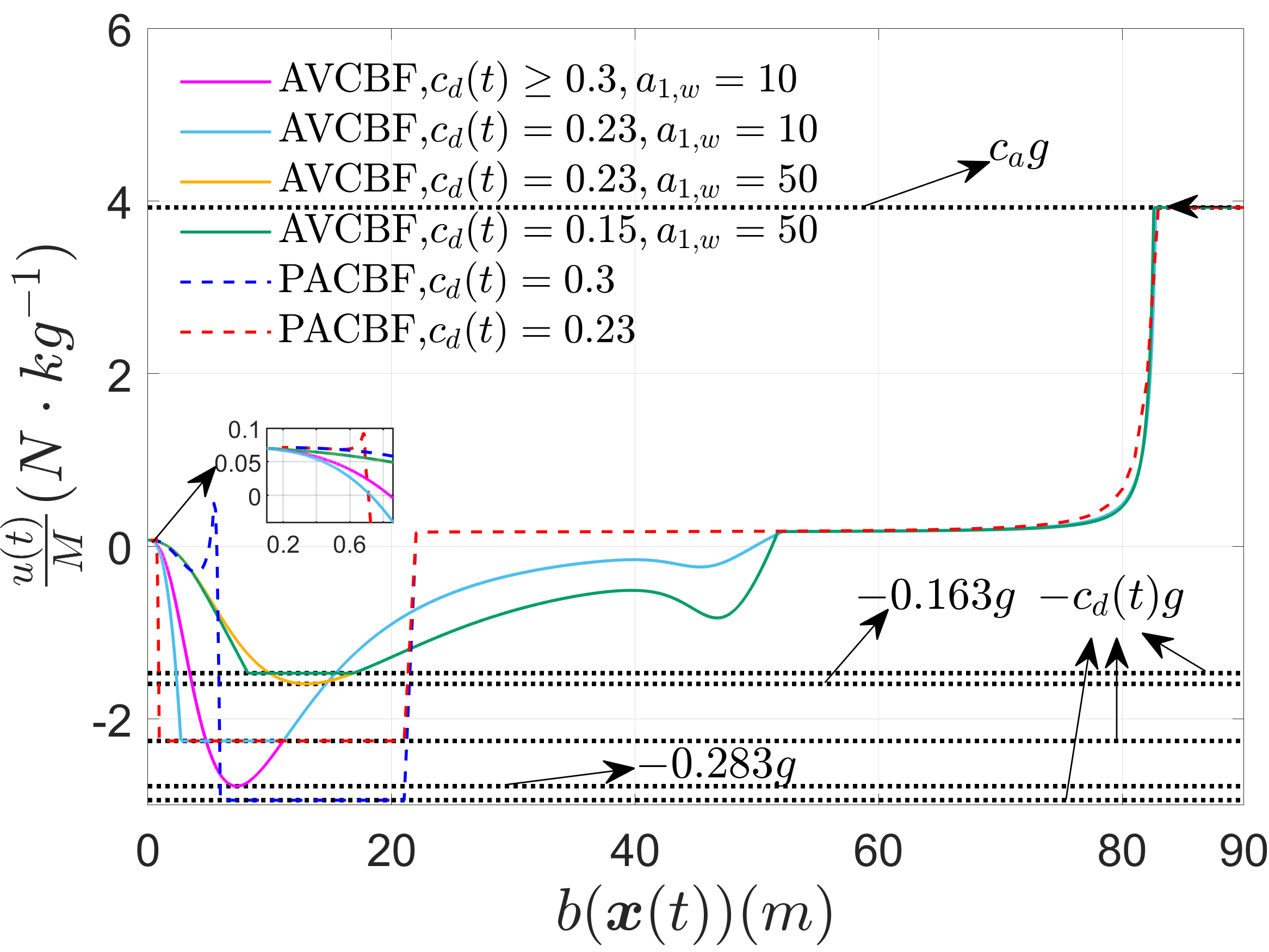}
    \caption{Control input $u(t)$ varies as $b(\boldsymbol{x(t)})$ goes to 0 under different lower control bounds. The arrows denote the changing trend for $b(\boldsymbol{x(t)})$ and $c_{d}(t)$ over 50 seconds. $b(\boldsymbol{x(0)})=90$ and $b(\boldsymbol{x(t)})\ge 0$ implies safety. Solid curves denote AVCBFs and dashed curves denote PACBFs. }
    \label{fig:AVBCBFs-PACBFs-1}
\end{figure} 

We compare our proposed AVCBFs with the state of the art (PACBFs) for a more urgent braking case by making initial velocity large as $v(0)=20m/s.$ In Fig. \ref{fig:AVBCBFs-PACBFs-1}, we change the lower control bound $-c_{d}(t)Mg$ to different value to compare both methods' adaptivity. Hyperparameter of CLF \eqref{eq:ACC-clf} for maganta and cyan curves is set as $c_{3}=70$ and for orange, green curves, we set $c_{3}=100$. Other hyperparameters for AVCBFs are set as $k_{1}=k_{2}=l_{1}=l_{2}=0.1, W_{1}=2e^{5}, Q=7e^{5}.$ We choose the cases denoted by cyan, orange and red curves where $c_{d}(t)=0.23$ in Fig. \ref{fig:AVBCBFs-PACBFs-1} and further analyze them in Fig. \ref{fig:AVBCBFs-PACBFs-2}. Based on Figs. \ref{fig:AVBCBFs-PACBFs-1} and \ref{fig:AVBCBFs-PACBFs-2}, by both methods, the ego vehicle first accelerates to desired velocity around $24m/s,$ then decelerates until $v=v_{p}.$ Even both methods work well for the cases shown in two figures, AVCBFs show more adaptivity to the road condition by simply adjusting $a_{1,w}$ thus the ego vehicle can brake earlier with even more strict limitation of deceleration (i.e., the smaller $c_{d}(t)$ due to more slippery road condition). For PACBFs, it is difficult to adjust hyperparameters to make ego vehicle adaptive to poor road conditions as $p_{1}(t)$ is always required to be a small enough by additional CLF \eqref{eq:HOCBFs-CLFs}. We also note that AVCBFs can generate a smoother optimal controller in Fig. \ref{fig:AVBCBFs-PACBFs-1}, as there is no noticeable overshoot (for blue and red curves by PACBFs, the controller shows overshooting strategy near the end, which is not smooth).
\begin{figure}[ht]
    \centering
    \includegraphics[scale=0.58]{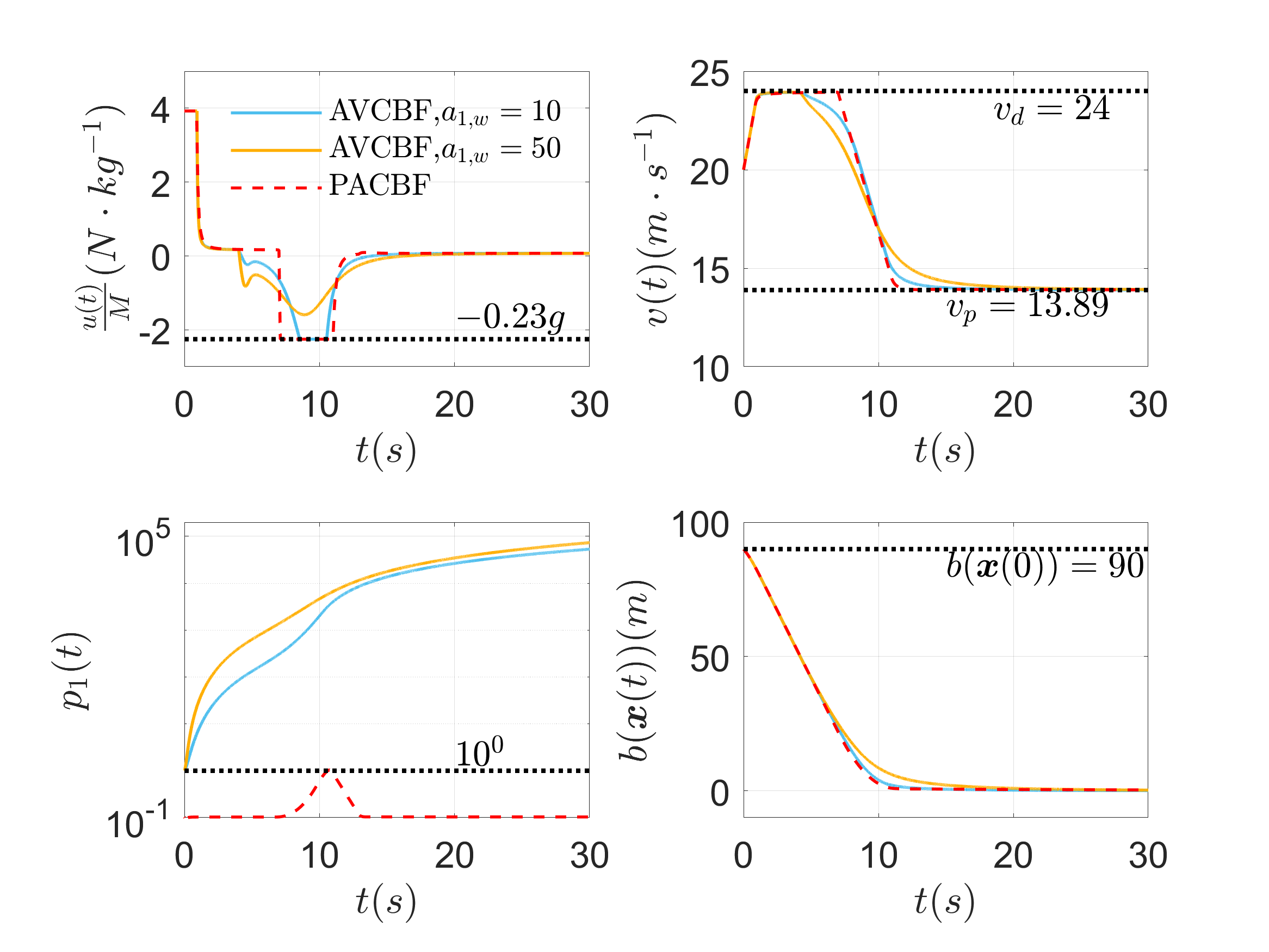}
    \caption{Control input $u(t)$, velocity $v(t)$, time-varying $p_{1}(t)$ and distance between two vehicles $b(\boldsymbol{x(t)})$ over 30 seconds for AVCBFs and PACBFs. $b(\boldsymbol{x(t)})\ge 0$ implies safety. Solid curves denote AVCBFs and dashed curve denotes PACBFs.}
    \label{fig:AVBCBFs-PACBFs-2}
\end{figure}

\section{Conclusion}
\label{sec:Conclusion}
We proposed Auxiliary-Variable Adaptive Control Barrier Functions (AVCBFs) for safety-critical optimal controller design, which addresses the notorious feasibility problem in the discretization-based QP approach in the case of time-varying and tight control bounds. Central to our approach is using auxiliary variables for the system dynamics and for the HOCBF constraints. We validated the proposed AVCBFs approach by applying it to a model of adaptive cruise control. Our proposed method generates a safe, smooth and adaptive controller by designing fewer constraints and simpler parameter tuning, which outperforms PACBFs as the baseline. One limitation of the proposed method is the fact that its hyperparameters in the cost function are not automatically tuned. Another limitation is that the feasibility of the optimization and system safety are not always guaranteed at the same time in the whole state space. We will address these limitations in future work by designing a fully automatic AVCBFs method. 
\bibliographystyle{IEEEtran}
\balance
\bibliography{references.bib}
\end{document}